\numberwithin{equation}{section}
\numberwithin{figure}{section}
\theoremstyle{plain}
\newtheorem{thm}{\protect\theoremname}[section]
  \theoremstyle{definition}
  \newtheorem{defn}[thm]{\protect\definitionname}
  \theoremstyle{remark}
  \newtheorem{rem}[thm]{\protect\remarkname}
  \theoremstyle{remark}
  \newtheorem*{rem*}{\protect\remarkname}
  \theoremstyle{plain}
  \newtheorem{lem}[thm]{\protect\lemmaname}
  \providecommand{\definitionname}{Definition}
  \providecommand{\lemmaname}{Lemma}
  \providecommand{\remarkname}{Remark}
\providecommand{\theoremname}{Theorem}
\begin{document}
\global\long\def\sphi{\mathcal{S}_{\phi}}
\global\long\def\dens{\text{dens}}
\global\long\def\A{\text{\ensuremath{\mathbf{A}}}}
\global\long\def\x{\mathtt{x}}
\global\long\def\F{\mathtt{F}}
\global\long\def\Th{\mathrm{Th}}

\title{Thicket Density}

\author{Siddharth Bhaskar}

\address{DIKU, University of Copenhagen.}
\begin{abstract}
We define a new type of ``shatter function'' for set systems that
satisfies a Sauer-Shelah type dichotomy, but whose polynomial-growth
case is governed by Shelah's 2-rank instead of VC dimension. We identify
the rate of growth of this shatter function, the quantity analogous
to VC density, with Shelah's $\omega$-rank.
\end{abstract}

\thanks{This line of inquiry began during my graduate study when my advisor,
Yiannis Moschovakis, instructed me to find the essential model-theoretic
content in Tiuryn \cite{Tiu89}. I discovered the central outline
of the results after the fourth or fifth time reading that paper.
These have since been expanded and refined over several years, with
the help of Anton Bobkov, Caroline Terry, Alex Kruckman, Jim Freitag,
David Lippel, and Ross Berkowitz. Thanks as well to Larry Moss, Steve
Lindell, and Scott Weinstein for their support while at Indiana University
and Haverford College. Finally, thanks again to Larry Moss and Alex
Kruckman for helping proofread various iterations of this manuscript.}
\maketitle

\section{Introduction}

The \emph{shatter function }is a function of type $\mathbb{N\to N}$
that measures of the complexity of a set system. The shatter function
of any set system satisfies the \emph{Sauer-Shelah dichotomy: }it
is either the binary exponential function $n\mapsto2^{n}$, or is
polynomially bounded. Whether or not the shatter function is polynomially
bounded or exponential depends on whether a certain integer parameter,
the \emph{VC dimension}, is finite or infinite. In the finite case,
the growth rate of the shatter function is a real number called the
\emph{VC density}.

VC density was discovered by Vapnik and Chervonenkis \cite{VC71}
and found important applications in probability theory, combinatorial
geometry, and computational learning theory.\footnote{For an excellent exposition, see Assouad \cite{Ass83}.}
The relevance of VC density to theories without the independence property
was pointed out by Laskowski \cite{Las90}, and subsequently developed
by Aschenbrenner et al. \cite{A15}.

In the present paper, we associate a new function of type $\mathbb{N\to N}$
with any set system, which we call the \emph{thicket shatter function}.
It also satisfies the Sauer-Shelah dichotomy, but the quantity that
distinguishes between polynomial and exponential growth is an instance
of Shelah's local 2-rank, and its rate of growth is an instance of
Shelah's local $\omega$-rank. In this context, we call these two
quantities \emph{thicket dimension }and \emph{thicket density }to
emphasize the analogy. 

Seen from another angle, our work can be read as a new way to calculate
Shelah's local $\omega$-rank using the asymptotic growth of certain
finite combinatorial objects. Notably, this can be performed in \emph{any
}model of a theory, not just a saturated one.

Our work was foreshadowed by Tiuryn, whose Lemma 3.6 in \cite{Tiu89}
contains a special case of our Theorem \ref{thm:dichotomy} below,
the Sauer-Shelah dichotomy for thicket shatter functions. It is remarkable
that he was concerned with problems in dynamic logic\textemdash at
best a distant relative of model theory, and even further from geometry
and computational learning theory.

\subsection*{Organization of this paper}

In Section \ref{sec:Thicket-dimension}, we introduce thicket dimension,
shatter function, and density, and prove the thicket version of the
Sauer-Shelah dichotomy. In Section \ref{sec:Ranks}, we identify thicket
density with a local model-theoretic rank, and in Section \ref{sec:Degree},
we formulate a notion of degree, or multiplicity, for thicket density.

\section{Trees\label{sec:Thicket-dimension}}

Our fundamental objects of study are set systems and binary trees.
The latter comes in two varieties: \emph{ordered}, and \emph{unordered},
and this refers to whether we distinguish left from right children
of non-leaves. Trees are normatively ordered; we shall say ``unordered
trees,'' when we mean it.
\begin{defn}
A \emph{tree} is either a single leaf or an ordered pair of subtrees,
which we call \emph{left }and \emph{right}. An \emph{unordered tree
}is either a single leaf or an unordered pair of subtrees.
\end{defn}

Notice that this definition allows for both finite trees and infinite
trees of depth $\omega$. In a set-theoretic account, a tree $T$
would be defined as a prefix-closed subset of $2^{<\omega}$ such
that for every $u\in2^{<\omega}$, $u0\in T\iff u1\in T$. We prefer
the ``coinductive data type'' definition presented here, which has
the advantage of giving a more succinct definition of unordered trees.
However, we will freely imagine a tree as a set of vertices, some
of which are \emph{leaves}, equipped with a partial order for the
ancestral relation. We trust that this will cause no difficulty.
\begin{defn}
\label{def:treeorder}For a tree $T$, and vertices $u,v,w\in T$,
we say

\begin{itemize}
\item $u\prec v$ in case $v\neq u$, but $v$ is contained in the subtree
with root $u$,
\item $u\prec_{L}v$ if $v$ is contained in the subtree whose root is the
left child of $u$, and
\item $u\prec_{R}v$ if $v$ is contained in the subtree whose root is the
right child of $u$.
\end{itemize}
For fixed $v\in T$, the set of vertices $P(v)=\{u:u\prec v\}$ is
linearly ordered by $\prec$ and is partitioned by $P_{L}(v)=\{u:u\prec_{L}v\}$
and $P_{R}(v)=\{u:u\prec_{R}v\}$. For example in Figure 2.1, $P_{L}(\ell_{3})=\{u\}$
and $P_{R}(\ell_{3})=\{v,w\}$.

\begin{figure}
\begin{tikzpicture}
\Tree
[ . $u$
	[ . $v$
		[ . $\ell_1$
		]
		[ . $w$
			[ . $\ell_2$
			]
			[ . $\ell_3$
			]
		]
	]
	[ . $\ell_4$
	]
]
\end{tikzpicture}\caption{A binary tree with leaves $\ell_{1},\ell_{2},\ell_{3},\ell_{4}$ and
non-leaves $u,v,w$. Depending on the context, this could be ordered
or unordered.}

\end{figure}
\end{defn}

For vertices $u,v$ in an unordered binary tree $T$, we cannot of
course say that $u\prec_{L}v$ or $u\prec_{R}v$. What we can say
is that $v$ and $w$ are contained in the same\emph{ }subtree of
$u$, or different\emph{ }subtrees of $u$.
\begin{defn}
For an unordered tree $T$, and vertices $u,v,w\in T$, we say
\begin{itemize}
\item $u\prec v$ in case $v\neq u$, but $v$ is contained in the subtree
with root $u$,
\item $v\sim_{u}w$ in case $u\prec v$, $u\prec w$, and $v$ and $w$
lie in the same subtree of $u$, and
\item $v\perp_{u}w$ in case $u\prec v$, $u\prec w$, and $v$ and $w$
lie in different subtrees of $u$.
\end{itemize}
Notice that, for any $v$ and $w$, there is a unique $u$ such that
$v\perp_{u}w$. For example, if we interpret the tree in Figure 2.1
to be unordered, then $\ell_{2}\sim_{u}\ell_{3}$, $\ell_{2}\sim_{v}\ell_{3}$,
but $\ell_{2}\perp_{w}\ell_{3}$.
\end{defn}

An important example of trees are the \emph{finite, balanced }trees.
\begin{defn}
The tree $B_{0}$ is a single leaf. The tree $B_{n+1}$ is the ordered
pair of trees $(B_{n},B_{n})$. Similarly, we can define the unordered
tree $B_{n}^{\circ}$.
\end{defn}

\begin{defn}
\label{def:embedding}An \emph{embedding }of the tree $T_{1}$ into
the tree $T_{2}$ is an injection of the vertices of $T_{1}$ into
the vertices of $T_{2}$ that preserves the $\prec_{L}$ and $\prec_{R}$
relations. An \emph{embedding }of the unordered tree $T_{1}$ into
the unordered tree $T_{2}$ is an injection of the vertices of $T_{1}$
into the vertices of $T_{2}$ that preserves the $\prec$, $\sim$,
and $\perp$ relations. The \emph{dimension }$d$ of a tree (respectively,
unordered tree) $T$ is the largest $n$ such that $B_{n}$ (respectively,
$B_{n}^{\circ}$) can be embedded into $T$, or $\infty$ if there
are arbitrarily large such $n$.
\end{defn}

\begin{rem}
For finite (ordered or unordered) trees $T$, dimension satisfies
the following useful recursive identity. If $T$ is a leaf, then $d=0$.
Otherwise, if $d_{1}$ and $d_{2}$ are the dimensions of its two
subtrees, then
\[
d=\begin{cases}
\max\{d_{1},d_{2}\} & \text{if }d_{1}\neq d_{2}\\
d_{1}+1 & \text{if }d_{1}=d_{2}.
\end{cases}
\]
\end{rem}

\section{Labeled trees and their solutions}

A \emph{set system} $(X,\mathcal{F},\in)$ is a two-sorted structure,
with sorts $X$ and $\mathcal{F}$, and equipped with a single binary
relation $\in\subseteq X\times\mathcal{F}$. Usually, we shall just
write $(X,\mathcal{F})$, suppressing $\in$. A typical example of
a set system is obtained by taking $X$ to be some set, $\mathcal{F}$
to be a family of subsets of $X$, and $\in$ to be the containment
relation. (But, in general, elements in $\mathcal{F}$ need not be
extensional.) Given any set system $(X,\mathcal{F},\in)$, its \emph{dual
}is the set system $(\mathcal{F},X,\in^{\star})$, where $F\in^{\star}x\iff x\in F$.
Clearly, the dual of the dual of any set system is identical to the
original.
\begin{defn}
Let $(X,\mathcal{F})$ be a set system and $T$ be a tree. An \emph{$X$-labeling
}of $T$ is an assignment of elements of $X$ to non-leaves of $T$.
If $T$ is an $X$-labeled tree with labeling $u\mapsto x_{u}$, and
if $v\in T$ is a leaf, then we say $F\in\mathcal{F}$ \emph{solves}
$v$ in case $(\forall u\prec v)(x_{u}\in F\iff u\prec_{L}v),$or
equivalently, $F\cap P(v)=P_{L}(v).$ In the special case that $T$
has depth 0, i.e., is a single leaf $v$, the quantifier $\forall u\prec v$
is vacuous, so $v$ has a solution in $\mathcal{F}$ iff $\mathcal{F}$
is nonempty.

\begin{figure}
[h]
\begin{tikzpicture}
\Tree
[.$1$
	[.$1$
		[.$2$
			[.$\{1,2\}$
			]
			[.$\{1,3\}$
			]
		]
		[.$0$
			[.$\circ$
			]
			[.$\circ$
			]
		]
	]
	[.$2$
		[.$3$
			[.$\{2,3\}$
			]
			[.$\{2,0\}$
			]
		]
		[.$10$
			[.$\{3,10\}$
			]
			[.$\{3,0\}$
			]
		]
	]
]
\end{tikzpicture}

\caption{A $\mathbb{N}$-labeling of $B_{3}$. Leaves are labeled with solutions
in ${\mathbb{N} \choose 2}$, the family of all 2-element subsets
of $\mathbb{N}$, when they have one.}
\end{figure}
\end{defn}

If $T$ is an unordered tree, an $X$-labeling is an assignment of
vertices of $T$ to elements of $X$. The solution of a single leaf
is meaningless, but we can still speak of a solution to the whole
tree.
\begin{defn}
If $T$ is an $X$-labeled unordered tree and $L\subseteq T$ is the
set of its leaves, a \emph{solution }to $T$ (in $\mathcal{F}$) is
an assignment $L\to\mathcal{F}$, denoted $v\mapsto F_{v}$, satisfying
\[
v_{1}\sim_{u}v_{2}\implies x_{u}\in F_{v_{1}}\leftrightarrow x_{u}\in F_{v_{2}},
\]
\[
v_{1}\perp_{u}v_{2}\implies x_{u}\in F_{v_{1}}\not\leftrightarrow x_{u}\in F_{v_{2}}.
\]
\end{defn}

In other words, any pair of sets labeling two leaves must disagree
on whether they include the element of $X$ labeling their most recent
common ancestor, and agree on all other common ancestors.
\begin{defn}
\label{def:admitting/forbidding trees}Say that a set system $(X,\mathcal{F})$
\emph{admits }a tree $T$ in case there is an $X$-labeling of $T$
such that each leaf of $T$ has a solution in $\mathcal{F}$. Similarly,
$(X,\mathcal{F})$ \emph{admits }an unordered tree $T$ in case there
exists an $X$-labeling of $T$ which has a solution in $\mathcal{F}$.
If a set system does not admit a tree or an unordered tree, then it
\emph{forbids }that tree.
\end{defn}

\begin{rem}
For a typical tree $T$, it is a much stronger statement to say that
$(X,\mathcal{F})$ admits $T$, rather than $(X,\mathcal{F})$ admits
the underlying unordered tree of $T$. Similarly, it is much stronger
to say that $(X,\mathcal{F})$ forbids the underlying unordered tree
of $T$, rather than $(X,\mathcal{F})$ forbids $T$. However, when
$T=B_{n}$, there is no difference: admitting or forbidding $B_{n}$
is equivalent to admitting or forbidding $B_{n}^{\circ}$.
\end{rem}

\begin{rem}
\label{rem:partition the labeling}Suppose $T$ is an unordered tree
with subtrees $T_{1}$ and $T_{2}$. Then if $(X,\mathcal{F})$ admits
$T$, there is an $X$-labeling of $T$ with a solution in $\mathcal{F}$.
Any solution can be partitioned into the leaves labeling $T_{1}$
and the leaves labeling $T_{2}$. If $x$ is the label of the root
of $T$, then the two parts of this solution disagree on $x$. Let
$\mathcal{F}_{x}$ and $\mathcal{F}_{\bar{x}}$ be the elements of
$\mathcal{F}$ containing and excluding $x$ respectively. Then, either
$(X,\mathcal{F}_{x})$ admits $T_{1}$ and $(X,\mathcal{F}_{\bar{x}})$
admits $T_{2}$, or $(X,\mathcal{F}_{x})$ admits $T_{2}$ and $(X,\mathcal{F}_{\bar{x}})$
admits $T_{1}$.

Conversely, suppose that for some $x\in X$, $(X,\mathcal{F}_{x})$
admits $T_{1}$ and $(X,\mathcal{F}_{\bar{x}})$ admits $T_{2}$.
Then $(X,\mathcal{F})$ admits $T$, by combining the $X$-labelings
of $T_{1}$ and $T_{2}$ into an $X$-labeling of $T$, labeling the
root by $x$. Similarly, if $(X,\mathcal{F}_{x})$ admits $T_{2}$
and $(X,\mathcal{F}_{\bar{x}})$ admits $T_{1}$, then $(X,\mathcal{F})$
admits $T$.
\end{rem}

\begin{defn}
The \emph{thicket dimension} $\dim(X,\mathcal{F})$ is the largest
$n$ such that $B_{n}$ is admissible, or $\infty$ if there are arbitrarily
large such $n$. If there are no such $n$, equivalently if $\mathcal{F}$
is empty, the dimension is $-1$.
\end{defn}

\begin{rem*}
This is a well-known quantity that occurs in many different contexts.
The thicket dimension of $(X,\mathcal{F})$ is equal to Shelah's rank
$R(\x=\x,\{\varphi\},2)$, where $\varphi$ is the formula $\x\in\F$,
relative to the theory $\Th(X,\mathcal{F})$ \cite{Shelah,Hod97}.
It is also called \emph{Littlestone dimension }in the context of computational
learning theory \cite{CF19}. Hodges calls it the \emph{branching
index }\textbf{\emph{\cite{Hod97}.}}
\end{rem*}

\begin{defn}
For a given set system $(X,\mathcal{F})$, let $\rho(n)$ be the maximum,
as $T$ varies over $X$-labelings of $B_{n}$, of the number of leaves
of $T$ with solutions in $\mathcal{F}$. The resulting function $\rho:\mathbb{N\to N}$
is the \emph{thicket shatter function }associated with $(X,\mathcal{F})$. 
\end{defn}

\begin{rem}
The thicket shatter function of any set system is bounded above by
the binary exponential function $n\mapsto2^{n}$.
\end{rem}

\subsubsection*{Dual Quantities}

Given a set system $(X,\mathcal{F})$ and a tree $T$, an \emph{$\mathcal{F}$-labeling
}of $T$ is an assignment of vertices of $T$ to elements of $\mathcal{F}$.
If $T$ is an $X$-labeled tree with labeling $u\mapsto F_{u}$, and
if $v\in T$ is a leaf, then we say $x\in X$ is a \emph{solution
}to $v$ in case $(\forall u\prec v)(F_{u}\in x\iff u\prec_{L}v).$
This allows us to define corresponding ``dual'' versions of thicket
dimension and the thicket shatter function, which are identical, respectively,
to the thicket dimension and thicket shatter function of the dual
set system $(X,\mathcal{F})^{\star}$.

VC dimension and dual VC dimension are bound within a single exponent;
that is, each is bound by 2 raised to the power of the other. On the
other hand, thicket dimension and dual thicket dimension are bound
within a double exponent. We do not know whether this is known to
be tight.

\section{The Sauer-Shelah dichotomy}

We now come to our first central result, that the Sauer-Shelah Lemma,
relating the growth of the (usual) shatter function to VC dimension,
holds verbatim in the thicket context.
\begin{thm}
\label{thm:dichotomy}Let $\chi(n,-1)=0$ and, for $k<\omega$, $\chi(n,k)={n \choose 0}+{n \choose 1}+\dots+{n \choose k}$.
For any set system $(X,\mathcal{F})$ and $k\in\{-1\}\cup\omega$,
\[
\dim(X,\mathcal{F})=\infty\implies\forall n\ \rho(n)=2^{n}
\]
\[
\dim(X,\mathcal{F})\le k\implies\forall n\ \rho(n)\le\chi(n,k).
\]
\end{thm}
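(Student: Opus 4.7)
The first implication is immediate from the definitions: if $\dim(X,\mathcal{F}) = \omega$, then for every $n$ there is a full balanced $X$-labeled tree of depth $n$, witnessing $\rho(n) = 2^n$.

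For the second implication, I plan to induct on $n$, keeping the inductive hypothesis universally quantified over set systems and over $k$. The base $n=0$ is immediate since $\rho(0) \le 1 = \varphi(0,k)$. In the inductive step, fix an arbitrary depth-$n$ labeled tree $T$ with root label $x_0$ and balanced subtrees $T_L, T_R$, and set $\mathcal{F}_{+} = \{F \in \mathcal{F} : x_0 \in F\}$ and $\mathcal{F}_{-} = \mathcal{F} \setminus \mathcal{F}_{+}$. Unpacking the realization condition at the root shows that a leaf of $T_L$ (resp.\ $T_R$) is realized in $T$ by some $F \in \mathcal{F}$ iff it is realized in $T_L$ (resp.\ $T_R$) by some $F \in \mathcal{F}_+$ (resp.\ $\mathcal{F}_-$). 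Hence the total number of realized leaves of $T$ is at most $\rho_{(X,\mathcal{F}_+)}(n-1) + \rho_{(X,\mathcal{F}_-)}(n-1)$.

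The crucial step is a dimension-reduction observation: at least one of $\dim(X,\mathcal{F}_+)$ and $\dim(X,\mathcal{F}_-)$ is at most $k - 1$. For if both were $\ge k$, witnessing full depth-$k$ balanced labeled trees $T_+$ and $T_-$ over $(X,\mathcal{F}_+)$ and $(X,\mathcal{F}_-)$ respectively could be glued together with $x_0$ as root and $T_+, T_-$ as the left and right subtrees, producing a full depth-$(k+1)$ balanced labeled tree in $(X,\mathcal{F})$ — the new root condition $x_0 \in F$ (resp.\ $x_0 \notin F$) is automatically met by any $\mathcal{F}_+$-realizer (resp.\ $\mathcal{F}_-$-realizer). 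This contradicts $\dim(X,\mathcal{F}) \le k$. Assuming WLOG $\dim(X,\mathcal{F}_-) \le k - 1$, Lemma \ref{lem:monotonicity} gives $\dim(X,\mathcal{F}_+) \le k$, so applying the inductive hypothesis to each side yields $\rho_{(X,\mathcal{F}_+)}(n-1) \le \varphi(n-1, k)$ and $\rho_{(X,\mathcal{F}_-)}(n-1) \le \varphi(n-1, k-1)$. Pascal's identity $\varphi(n-1,k) + \varphi(n-1,k-1) = \varphi(n,k)$ then closes the induction.

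The principal technical content is the dimension-reduction observation, which is the thicket analog of the classical $\mathrm{VC\text{-}dim}(\mathcal{F}_2) \le \mathrm{VC\text{-}dim}(\mathcal{F}) - 1$ step in the standard Sauer-Shelah argument. Here it is almost painless, because the tree formalism lets us re-use precisely the discriminating label $x_0$ as the gluing root. The only edge case worth flagging is $k = 0$, where $k - 1 = -1$ forces $\mathcal{F}_- = \varnothing$; this is handled uniformly via the convention $\varphi(n, -1) = 0$.
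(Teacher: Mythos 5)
Your proof is correct and essentially matches the paper's: the dimension-reduction observation is exactly the paper's Lemma~\ref{lem:dim ineqs}, proved by the same gluing argument, and your direct induction on $n$ inlines the paper's auxiliary Lemma~\ref{lem:dichotomy} on integer-labeled trees rather than factoring it out as a separate combinatorial step.
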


The first implication is immediate: if $\dim(X,\mathcal{F})=\infty$,
then $(X,\mathcal{F})$ admits $B_{n}$ for each $n$, so $\rho(n)=2^{n}$.
If however $\dim(X,\mathcal{F})\le k$, then $(X,\mathcal{F})$ forbids
$B_{k+1}$. The conclusion follows by the second sentence of the next
theorem.
\begin{thm}
\label{thm:fundamental thm forbidden trees}For every finite unordered
tree $T$ of dimension $d$, there's a function $f(n)\in O(n^{d-1})$
such that if any set system $(X,\mathcal{F})$ forbids $T$, then
$\rho(n)\le f(n)$. If, in particular, $T$ is $B_{d}$, then $\rho(n)\le\chi(n,d-1)$.
\end{thm}

\begin{proof}
Suppose that $(X,\mathcal{F})$ forbids $T$. We proceed by induction
on the construction of $T$. If $T$ has dimension 0, it is the single
leaf $B_{0}$. If $(X,\mathcal{F})$ forbids $T$, then $\mathcal{F}$
must be empty, so $\rho(n)=0$, which is $O(n^{-1})$.

Otherwise, suppose that $T$ has subtrees $T_{1}$ and $T_{2}$. Suppose
their dimensions are $d_{1}$ and $d_{2}$, and suppose that $f_{1}$
and $f_{2}$ are given by induction. For $x\in X$, let $\mathcal{F}_{x}$
be the collection of those sets in $\mathcal{F}$ that include $x$,
and let $\mathcal{F}_{\bar{x}}$ be the collection of those which
exclude $x$. Let $\rho_{x}(n)$ and $\rho_{\bar{x}}(n)$ be the thicket
shatter functions of $(X,\mathcal{F}_{x})$ and $(X,\mathcal{F}_{\bar{x}})$
respectively. It is easy to see that for any $x\in X$, $\rho(n)\le\rho_{x}(n)+\rho_{\bar{x}}(n);$
simply take the $X$-labeled tree witnessing $\rho(n)$ and observe
that the solutions to its leaves can be partitioned into those containing
$x$ and those not containing $x$.

Let $P_{i}(x)$ express that $(X,\mathcal{F}_{x})$ admits $T_{i}$,
and $Q_{i}(x)$ express that $(X,\mathcal{F}_{\bar{x}})$ admits $T_{i}$.
Then by Remark \ref{rem:partition the labeling},
\[
(X,\mathcal{F})\text{ admits }T\iff\exists x\in X\:\big((P_{1}(x)\wedge Q_{2}(x))\vee(P_{2}(x)\wedge Q_{1}(x))\big).
\]
Reasoning propositionally, $(X,\mathcal{F})$ forbids $T$ if and
only if for all $x\in X$,
\[
(\neg P_{1}(x)\wedge\neg P_{2}(x))\vee(\neg P_{1}(x)\wedge\neg Q_{1}(x))\vee(\neg Q_{2}(x)\wedge\neg P_{2}(x))\vee(\neg Q_{2}(x)\wedge\neg Q_{1}(x)).
\]
By induction, this implies, for all $x\in X$,
\[
(\rho_{x}\le f_{1}\wedge\rho_{x}\le f_{2})\vee(\rho_{x}\le f_{1}\wedge\rho_{\bar{x}}\le f_{1})\vee(\rho_{\bar{x}}\le f_{2}\wedge\rho_{x}\le f_{2})\vee(\rho_{\bar{x}}\le f_{2}\wedge\rho_{\bar{x}}\le f_{1}),
\]
where, e.g., $\rho_{x}\le f_{1}$ abbreviates $\forall n\in\mathbb{N}\,\rho_{x}(n)\le f_{1}(n)$.
Label the four disjuncts (i)-(iv).

Suppose that for some $x\in X$, case (ii) holds. Then $\rho(n)\le\rho_{x}(n)+\rho_{\bar{x}}(n)\le2f_{1}(n)$.
The right-hand side is $O(n^{d_{1}})$, which is $O(n^{d})$, and
we are done. Similar reasoning applies if for some $x\in X$, case
(iii) holds. If neither (ii) nor (iii) holds for any $x$, then
\[
\forall x\in X\,\big((\rho_{x}\le f_{1}\wedge\rho_{x}\le f_{2})\vee(\rho_{\bar{x}}\le f_{2}\wedge\rho_{\bar{x}}\le f_{1})\big),
\]
in which case, 
\[
\forall x\in X\,\big((\rho_{x}\le f_{1}\vee\rho_{\bar{x}}\le f_{1})\wedge(\rho_{x}\le f_{2}\vee\rho_{\bar{x}}\le f_{2})\big).
\]

I claim that for any function $g$, $(\forall x)(\rho_{x}\le g\vee\rho_{\bar{x}}\le g)\implies\rho<\int g$,
where $\int g$ is defined by $(\int g)(n)=1+\sum_{k<n}g(k)$. If
so $\rho$, would be bounded above by both $\int f_{1}$ and $\int f_{2}$.
If $d_{1}=d_{2}$, then $d=d_{1}+1$, and $\rho$ would be bounded
by a function in $O(n^{d_{1}+1})$, which is $O(n^{d})$. If $d_{1}\neq d_{2}$,
then suppose without loss of generality that $d_{1}<d_{2}$. Then
$\rho$ would be bounded above by $\int f_{1}$, a function in $O(n^{d_{1}+1})$,
which is again $O(n^{d})$, which completes the proof.

It remains to justify the claim. We show that $\rho(n)\le(\int g)(n)$
by induction on $n$. If $n=0$, $\rho(n)\le1\le(\int g)(n)$. Otherwise,
consider the labeled tree $W$ witnessing $\rho(n)$, and let $r$
be the label of its root. By hypothesis either $\rho_{r}$ or $\rho_{\bar{r}}$
is bounded above by $g$. Therefore, either the number of solutions
to the left of the root or the number of solutions to the right must
be bounded by $g(n-1)$. The number of solutions in the remaining
half is bounded by $\rho(n-1)$, which by induction is at most $(\int g)(n-1)$.
Therefore the total number of solutions is at most $g(n-1)+(\int g)(n-1)=(\int g)(n)$.

This concludes the proof of the first sentence. Finally, we must show
that if $T$ is $B_{d}$, $\rho(n)$ is bounded by $\chi(n,d-1)$.
The base cases $d=0$ and $d=1$ are the same as before. Otherwise,
if $(X,\mathcal{F})$ forbids $B_{d}$, then for any $x\in X$, either
$(X,\mathcal{F}_{x})$ forbids $B_{d-1}$ or $(X,\mathcal{F}_{\bar{x}})$
forbids $B_{d-1}$. Therefore, for any $x\in X$, $\rho_{x}\le\varphi(\cdot,d-1)$,
or $\rho_{\bar{x}}\le\varphi(\cdot,d-1)$. Hence, by the above claim,
$\rho\le\int\varphi(\cdot,d-1)$, which is $\varphi(\cdot,d)$.
\end{proof}
The VC density of a set system is defined to be the ``growth rate''
of the shatter function. Analogously, 
\begin{defn}
\emph{Thicket density }$\dens(X,\mathcal{F})$ is defined by
\[
\inf\{c\in\mathbb{R}:\rho(n)\in O(n^{c})\},
\]
where $\rho(n)$ is the thicket shatter function of $(X,\mathcal{F})$.
In case $\rho(n)\in O(n^{c})$ for all $c\in\mathbb{R}$, $\dens(X,\mathcal{F})=-\infty$,
and in case $\rho(n)\in O(n^{c})$ for no $c\in\mathbb{R}$, $\dens(X,\mathcal{F})=\infty$.
The \emph{dual thicket density }is defined with the dual thicket shatter
function $\rho^{\star}$ instead of $\rho$.
\end{defn}

\begin{rem}
By Theorem \ref{thm:dichotomy}, if $\dim(X,\mathcal{F})=k<\infty$,
then $\rho(n)\le\chi(n,k)$, which is $O(n^{k})$. Therefore, $\dens(X,\mathcal{F})\le\dim(X,\mathcal{F})$
for any set system $(X,\mathcal{F})$, as $\mathbb{R}\cup\{-\infty,\infty\}$-valued
quantities.
\end{rem}

\begin{lem}
\label{lem:basic density vs cardinality}The thicket density of $(X,\mathcal{F})$
is $-\infty$, $0$, or $\ge1$, depending on whether the cardinality
of $\mathcal{F}$ modulo extensionality is zero, positive and finite,
or infinite.
\end{lem}

\begin{proof}
If $\mathcal{F}$ is empty, then $\rho(n)=0$, and $\dens(X,\mathcal{F})=-\infty$.
If $\mathcal{F}$ has nonempty but has finitely many elements modulo
extensionality, say at most $B$, then for all $n$ $\rho(n)\le B$.
Therefore, $\dens(X,\mathcal{F})=0$. On the other hand, if $\mathcal{F}$
has infinitely many elements modulo extensionality, then we can extract
a sequence $(x_{i},F_{i})_{i<\omega}$ such that for any $i<j$, $F_{i}$
and $F_{j}$ agree on $x_{h}$ for $h<i$, but disagree on $x_{i}$.
(Given $\mathcal{F}$, pick some $x\in X$ such that both $\mathcal{F}_{x}$
and $\mathcal{F}_{\bar{x}}$ are nonempty. One of them, call it $\mathcal{F}^{\star}$
must be infinite; pick some $F$ come from the other, $\mathcal{F}^{\dagger}$.
Replace $\mathcal{F}$ by $\mathcal{F}^{\star}$ and repeat.) Then
the unordered tree pictured in Figure 4.1 has a solution, and, by
truncating the tree at depth $n$ for each $n$, we observe that $\rho(n)\ge n+1$,
hence $\dens(X,\mathcal{F})\ge1$.
\end{proof}
\begin{figure}
\begin{tikzpicture}
\Tree
[ . $x_0$
	[ . $x_1$
		[ . $x_2$
			[ . $\vdots$
			]
			[ . $F_2$
			]
		]
		[ . $F_1$
		]
	]
	[ . $F_0$
	]
]
\end{tikzpicture}\caption{The unordered tree of Lemma \ref{lem:basic density vs cardinality}}

\end{figure}

\subsubsection*{An alternate formulation of density}

Theorem \ref{thm:dichotomy} says roughly that, for set systems of
finite thicket dimension, trees which are balanced cannot have very
many realized leaves. Then it stands to reason that admissible trees,
i.e. those with every leaf realized, must be far from balanced. This
idea yields another way to define thicket density.
\begin{defn}
Let $\sigma(n)$ be the minimum depth of any finite $X$-labeled tree
with at least $n$ realized leaves.
\end{defn}

If $(X,\mathcal{F})$ has infinite thicket density, then $\sigma(n)=\lfloor\log_{2}(n)\rfloor$,
as witnessed by a balanced binary tree. If $(X,\mathcal{F})$ has
zero density, then $\sigma(n)$ is undefined for arbitrarily large
$n$. The interesting case is when $(X,\mathcal{F})$ has finite and
positive density, in which case $\sigma(n)$ is bounded below by $n^{\epsilon}$
for some $\epsilon>0$. In fact, the rate of growth of $\sigma$ is
the inverse of the density:
\begin{lem}
\label{lem:Fir tree dichotomy}For any set system $(X,\mathcal{F})$
of finite positive density $\delta$, if $\epsilon=\sup\{c\in\mathbb{R}:\sigma(n)\in\Omega(n^{c})\}$,
then $\epsilon=\frac{1}{\delta}$.
\end{lem}

\begin{proof}
Let $T$ be a finite $X$-labeled binary tree with $n$ realized leaves,
and let $\Delta$ be the depth of $T$. Create a new $X$-labeled
tree $T'$ by taking the balanced binary tree of depth $\Delta$,
superimposing the labeling of $T$, and labeling all remaining non-leaves
arbitrarily. Then $T'$ also has at least $n$ realized leaves, which
implies that $n\le\rho(\Delta)$. Since $T$ is arbitrary, by taking
the $T$ with the least possible $\Delta$, we obtain $n\le\rho(\sigma(n))$,
for all $n\in\mathbb{N}$. This implies that $\epsilon\ge\frac{1}{\delta}$.

On the other hand, let $T$ be a finite, balanced $X$-labeled binary
tree of depth $\Delta$ and $n$ realized leaves. Minimally prune
$T$ to obtain a tree $T'$ such that every leaf of $T'$ is realized;
then $T'$ still has $n$ realized leaves. Therefore, the depth of
$T'$ is at least $\sigma(n)$, hence $\Delta\ge\sigma(n)$. Since
this holds true of arbitrary $T$, consider $T$ of depth $\Delta$
with the greatest possible $n$. Then we have $\Delta\ge\sigma(\rho(\Delta))$,
which implies $\delta\le\frac{1}{\epsilon}$. Hence $\delta\epsilon=1$,
and we are done.
\end{proof}
\begin{figure}

\begin{tikzpicture}
\Tree
[.$1$
	[.$2$
		[.$\{1,2\}$
		]
		[.$7$
			[.$\{1,7\}$
			]
			[.$8$
				[.$\{1,8\}$
				]
				[.$\{1,9\}$
				]
			]
		]
	]
	[.$2$
		[.$3$
			[.$\{2,3\}$
			]
			[.$\{2,0\}$
			]
		]
		[.$10$
			[.$3$
				[.$\{3,10\}$
				]
				[.$\{100,10\}$
				]
			]
			[.$100$
				[.$\{3,100\}$
				]
				[.$\{3,0\}$
				]
			]
		]
	]
]
\end{tikzpicture}\caption{A $\mathbb{N}$-labeled tree with all leaves realized in ${\mathbb{N} \choose 2}$.
The depth of such trees grows like $\Omega(\sqrt{n})$, where $n$
is the size.}

\end{figure}

\section{Rank\label{sec:Ranks}}

In this section, we establish an equivalence between thicket density
and ``Shelah's local $\omega$-rank'' $R(p,\{\varphi\},\omega)$,
the local analogue of Morley rank.\footnote{We do not know of a commonly accepted name for this rank. However,
it is equivalent to $R(p,\{\varphi\},\infty)$, and the Cantor-Bendixson
rank of the space of $\varphi$-types.} From a model-theoretic perspective, the significance of this result
is probably reversed, i.e., we identify $R(p,\{\varphi\},\omega)$
with the thicket density of a particular set system.

Concretely, given any partitioned first-order formula $\varphi(x,y)$,
and for any model $M$ of a complete theory $T$, define a set system
$(M^{x},M^{y},\in)$ by $a\in b\iff M\models\varphi(a,b)$. Then,
roughly speaking, we can calculate the rank $R(\x=\x,\{\varphi\},\omega)$
of a finite $\varphi$-type modulo $T$ using the thicket shatter
function of the set system $(M^{x},M^{y},\in)$. Since the thicket
shatter function depends only on $T$ and not the particular choice
of $M$, we can carry out this calculation in \emph{any }model $M\models T$,
not only a sufficiently saturated one. 

In this section, fix a set system $(X,\mathcal{F})$, and a sufficiently
saturated model $M$ of $\Th(X,\mathcal{F},\in)$ (so that we can
calculate $\omega$-rank). Let $M_{X}$ and $M_{\mathcal{F}}$ be
the two sorts of $M$. Let $\varphi$ be the formula $\x\in\F$. (We
use $\mathtt{typewriter}$ script to distinguish syntactic variables,
e.g., $\x$, from values, e.g., $x\in X$.) By a \emph{$\varphi$-formula},
we mean a formula of the form $\varphi(a,\F)$ or $\neg\varphi(a,\F)$
for some $a\in M_{X}$. Whenever we assert that some sentence holds,
we always mean relative to the model $M$.

By a \emph{finite $\varphi$-type}, we mean a conjunction of $\varphi$-formulas,
including the \emph{empty }conjunction $\top$. Two finite $\varphi$-types
$p$ and $q$ are \emph{contradictory }in case there exists some $a\in M_{X}$
such that one of $\{\varphi(a,\F),\neg\varphi(a,\F)\}$ occurs as
a conjunct in $p$, and the other occurs as a conjunct in $q$. In
this case, we say that $p$ and $q$ \emph{disagree on }$\varphi(a,\F)$.
By $p(\mathcal{F})$ we mean the subfamily of $\mathcal{F}$ satisfying
the type $p$.
\begin{defn}
\label{def:Adm^T_=00005CPsi}For any unordered tree $T$, let $L$
be its set of leaves and $N$ be its set of non-leaves. Let $\mathcal{L}_{T}$
be the signature $\{\in\}$ expanded by a fresh set of constant symbols
$\{a_{u}:u\in N\}$ of type $X$, and $\{b_{v}:v\in L\}$ of type
$\mathcal{F}$. Given in addition a finite $\varphi$-type $p$, define
a first-order $\mathcal{L}_{T}$-theory $Adm_{p}^{T}$ by:
\begin{enumerate}
\item $p(b_{v})$ for any $v\in L$,
\item $\varphi(a_{u},b_{v})\not\leftrightarrow\varphi(a_{u},b_{w})$ if
$v,w\in L^{2}$, $u\in N$, and $v\perp_{u}w$, and
\item $\varphi(a_{u},b_{v})\leftrightarrow\varphi(a_{u},b_{w})$ if $v,w\in L^{2}$,
$u\in N$, and $v\sim_{u}w$.
\end{enumerate}
\end{defn}

\begin{rem}
\label{rem:working in M}The following hold of $Adm_{p}^{T}$:
\begin{itemize}
\item If $(X,p(\mathcal{F}))$ admits $T$, then $\Th(X,\mathcal{F})$ is
consistent with $Adm_{p}^{T}$. If $T$ is finite, then the converse
holds as well.
\item Since $M$ is sufficiently saturated, the consistency of $\Th(X,\mathcal{F})\cup Adm_{p}^{T}$
is equivalent the admissibility of $T$ in $(M_{X},p(M_{\mathcal{F}}))$.
(Concretely: the existence of injections $u\mapsto a_{u}:N\to M_{X}$
and $v\mapsto b_{v}:L\to M_{\mathcal{F}}$ such that properties (1)-(3)
of Definition \ref{def:Adm^T_=00005CPsi} hold in $M$.)
\end{itemize}
\end{rem}

\begin{lem}
Suppose that $T$ is a finite-dimensional unordered tree and that
there is an embedding of the unordered tree $S$ into $T$.\footnote{cf. Definition \ref{def:embedding}}
Then if $\Th(X,\mathcal{F})$ is consistent with $Adm_{p}^{T}$, it
is consistent with $Adm_{p}^{S}$.
\end{lem}

\begin{proof}
First observe that for every vertex $v\in T$, the subtree rooted
at $v$ contains some leaf. For otherwise (since each non-leaf has
two children) $T$ would have a complete infinite binary subtree,
and not be finite-dimensional. Fix an embedding $\iota:S\to T$, and
let $\jmath$ map leaves of $S$ to leaves of $T$, such that for
each leaf $\ell\in S$, $\jmath(\ell)$ is contained in the subtree
rooted at $\iota(\ell)$. Then notice that for any non-leaf $s$ and
leaf $\ell$ in $S$, $s\prec\ell$ in $S$ iff $\iota(s)\prec\jmath(\ell)$
in $T$. Moreover, if $\ell'$ is another leaf of $S$, then $\ell\perp_{s}\ell'$
in $S$ iff $\jmath(\ell)\perp_{\iota(s)}\jmath(\ell')$ in $T$.
Similarly, $\ell\sim_{s}\ell'$ in $S$ iff $\jmath(\ell)\sim_{\iota(s)}\jmath(\ell')$
in $T$. 

Suppose that $T$ is admissible in $(M_{X},p(M_{\mathcal{F}}))$,
witnessed by the labeling $u\mapsto a_{u}$, $v\mapsto b_{v}$. For
any non-leaf $s\in S$, label it by $a_{\iota(s)}$, and for any leaf
$t\in S$, label it by $b_{\jmath(t)}$. Since the relations $\perp$
and $\sim$ are preserved by the map $\iota$ on non-leaves and $\jmath$
on leaves, properties (2) and (3) of Definition \ref{def:Adm^T_=00005CPsi}
carry over from $T$ to $S$. Since $\jmath$ maps leaves to leaves,
property (1) of Definition \ref{def:Adm^T_=00005CPsi} carries over
from $T$ to $S$. Hence, $S$ is admissible in in $(M_{X},p(M_{\mathcal{F}}))$.
\end{proof}
Next, we define the rank $R(p,\{\varphi\},\omega)$, for finite $\varphi$-types
$p$. Here we follow the definition and notation of Pillay \cite{Pil83},
who writes $R_{\aleph_{0}}^{\varphi}$, but we omit the cardinality
$\aleph_{0}$, and just write $R^{\varphi}$.
\begin{defn}
\label{def:Rank R}For any finite $\varphi$-type $p$ in the free
variable $F$, let
\begin{itemize}
\item $R^{\varphi}(p)\ge0$ if $p$ is consistent, i.e, there exists some
$b\in M_{\mathcal{F}}$ such that $p(b)$.
\item $R^{\varphi}(p)\ge\alpha+1$ in case there is a pairwise contradictory
family of finite $\varphi$-types $\{p_{i}:i<\omega\}$ for each $i<\omega$,
such that for each $i$, $R^{\varphi}(p\wedge p_{i})\ge\alpha$.
\item For limit ordinal $\alpha$, $R^{\varphi}(p)\ge\alpha$ just in case
$R^{\varphi}(p)\ge\beta$ for all $\beta<\alpha$.
\end{itemize}
We say $R^{\varphi}(p)=\alpha$ in case $R^{\varphi}(p)\ge\alpha$
but $R^{\varphi}(p)\not\ge\alpha+1$, $R^{\varphi}(p)=-\infty$ if
$R^{\varphi}(p)\not\ge0$, and $R^{\varphi}(p)=\infty$ in case $R^{\varphi}(p)\ge\alpha$
for all $\alpha$.
\end{defn}

\begin{rem}
If $R^{\varphi}(p)\ge\omega$, then $R^{\varphi}(p)=\infty$. (Pillay
\cite{Pil83} Exercise 6.53) Hence $R^{\varphi}$ takes values in
$\omega\cup\{-\infty,\infty\}$.
\end{rem}

In fact, we can rearrange the quantifiers in the second clause to
be a little bit stronger, using some simple combinatorics:
\begin{lem}
\label{lem:extracting sequences}Suppose that $\{p_{i}:i<\omega\}$
is a sequence of pairwise contradictory finite $\varphi$-types. Then
there is an infinite set $S\subseteq\omega$ such that for any $r\in S$,
there exists $a\in M_{X}$, such that for any $s>r$ in $S$, $p_{r}$
and $p_{s}$ disagree on $\varphi(a,F)$.
\end{lem}

\begin{proof}
For any infinite set $S\subseteq\omega$, let $m$ be its least element.
Since $p_{m}$ is finite, there must be some $a\in M_{X}$ and an
infinite subset $S'\subseteq S\setminus\{m\}$ such that $p_{m}$
disagrees with $p_{n}$ on $\varphi(a,F)$ for any $n\in S'$. Let
$S_{0}=\omega$, and for each $i<\omega$, obtain $S_{i+1}$ from
$S_{i}$ in this manner. Obtain $m_{i}$ and $a_{i}$ from $S_{i}$
as above. Then $S_{0}\supset S_{1}\supset S_{2}\supset\dots$, $m_{0}<m_{1}<m_{2}<\dots$,
and $m_{i}\in S_{j}\iff i\ge j$. Moreover, for every $i<j$ $p_{m_{i}}$
and $p_{m_{j}}$ disagree on $\varphi(a_{i},F)$. Therefore, we may
take $S=\{m_{0},m_{1},m_{2},\dots\}$.
\end{proof}
\begin{defn}
[$k$-branching trees]

The 0-branching tree $T_{0}$ is a single leaf. For $k\ge1$, the
$k$-branching tree $T_{k}$ is the unordered binary tree with subtrees
$T_{k}$ and $T_{k-1}$.
\end{defn}

\begin{figure}
\begin{tikzpicture} 
\Tree
[ 
	[ 
		[ .$\circ$
		]
		[
			[ .$\circ$
			]
			[ 
				[ .$\circ$
				]
				[ .$\vdots$
				]
			]
		]
	] \edge[thick];
	[ 
		[ 
			[ .$\circ$
			]
			[
				[ .$\circ$
				]
				[ 
					[ .$\circ$
					]
					[ .$\vdots$
					]
				]
			]
		] \edge[thick];
		[
			[ 
				[ .$\circ$
				]
				[
					[ .$\circ$
					]
					[ 
						[ .$\circ$
						]
						[ .$\vdots$
						]
					]
				]
			] \edge[thick];
			[ 
				[ 
					[ .$\circ$
					]
					[
						[ .$\circ$
						]
						[ 
							[ .$\circ$
							]
							[ .$\vdots$
							]
						]
					] 
				] \edge[thick];		
				[ .$\vdots$
				]
			]
		]
	]
]

\end{tikzpicture}

\caption{The infinite 2-branching tree. The \emph{spine }is indicated by the
thick edge. The vertices of any $T_{k+1}$ can be partitioned into
the vertices on the spine, plus countably many copies of $T_{k}$.}

\end{figure}

\begin{rem}
$T_{k}$ has dimension $k$, and the number of vertices in $T_{k}$
at depth $n$ is $\chi(n,k)$.
\end{rem}

\begin{thm}
\label{thm:rank d iff d-tree}$R^{\varphi}(p)\ge k$ if and only if
$\Th(X,\mathcal{F})\cup Adm_{p}^{T_{k}}$ is consistent.
\end{thm}

\begin{proof}
We work in $M$, by Remark \ref{rem:working in M}, and induct on
$k$. If $k=0$, then $R^{\varphi}(p)\ge k$ iff there exists some
\textbf{$b\in M_{\mathcal{F}}$ }such that $p(b)$, iff the tree $T_{0}$
is admissible in $(M_{X},p(M_{\mathcal{F}}))$, iff $\Th(X,\mathcal{F})\cup Adm_{p}^{T_{0}}$
is consistent.

Otherwise, suppose that $R^{\varphi}(p)\ge k+1$, and let $\{p_{i}\}_{i<\omega}$
witness this. By Lemma \ref{lem:extracting sequences}, there is an
infinite set $S\subseteq\omega$ such that for any $r\in S$, there
is some $a\in M_{X}$, such that for any $s>r$ in $S$, $p_{r}$
and $p_{s}$ disagree on $\varphi(a,\F)$. Let $s(i)$ be the $i$-th
element of $S$, and $a^{(i)}$ witness the $\varphi$-formula distinguishing
$p_{s(i)}$ from $p_{s(j)}$, for $j>i$.

Let $N$ and $L$ be the sets of non-leaves and leaves, respectively,
of $T_{k+1}$. The tree $T_{k+1}$ can be partitioned into a single
infinite spine plus countably many copies of $T_{k}$. Let $N=\bigcup_{i<\omega}N_{i}\cup N_{s}$,
where $N_{i}$ is the set of non-leaves of the $i$-th copy of $T_{k}$,
and $N_{s}$ are the vertices down the spine. Let $L=\bigcup_{i<\omega}L_{i}$,
where $L_{i}$ is the set of leaves of the $i$-th copy of $T_{k}$.

$\F$or each $i<\omega$, we have $R^{\varphi}(p\wedge p_{s(i)})\ge k$.
Therefore, by induction, there are injections $u\mapsto a_{u}:N_{i}\to M_{X}$
and $v\mapsto b_{v}:L_{i}\to M_{\mathcal{\F}}$ satisfying properties
(1), (2), and (3). By taking the union over all $i$, we have a map
$v\mapsto b_{v}:L\to M_{\mathcal{\F}}$. Similarly, we can get a map
$u\mapsto a_{u}:N\to M_{X}$ if we specify $a_{u}$ for $u\in N_{s}$.
But for $u\in N_{s}$ of distance $i$ from the root, simply let $a_{u}=a^{(i)}$.

$\F$or any $v\in L$, $b_{v}$ satisfies $p\wedge p_{s(i)}$ for
some $i$; in particular it satisfies $p$. Thus we have verified
(1), and it remains to verify (2) and (3). $\F$ix two leaves $v,w\in L^{2}$
and a common ancestor $u\in N$.
\begin{itemize}
\item If for some $i$, $v,w\in L_{i}$ and $u\in N_{i}$, then the relations
$v\perp_{u}w$, $v\sim_{u}w$ are identical in the ambient tree $T_{k+1}$
and the $i$-th copy of $T_{k}$, hence (2) and (3) are inherited
from the given maps $N_{i}\to M_{X}$ and $L_{i}\to M_{\mathcal{\F}}$.
\item If for some $i$, $v,w\in L_{i}$ and $u\notin N_{i}$, then $u$
must be the vertex on the spine of distance $i$ from the root, and
$v\sim_{u}w$. Therefore, $a_{u}=a^{(i)}$, and since $v,w$ satisfy
$p_{s(i)}$, $b_{v}$ and $b_{w}$ must agree on $\varphi(a_{u},\F)$.
\item If for some $i<j$, $v\in L_{i}$ and $w\in L_{j}$, then $u$ must
be some vertex on the spine of distance at most $i$ from the root;
moreover, $p_{s(i)}(b_{v})$ and $p_{s(j)}(b_{w})$. Therefore, $b_{v}$
and $b_{w}$ disagree on $\varphi(a^{(i)},\F)$, and agree on $\varphi(a^{(i')},\F)$
for all $0\le i'<i$. But, $a_{u}=a^{(i)}$ just in case $v\perp_{u}w$,
and $a_{u}=a^{(i')}$ for some $0\le i'<i$ just in case $v\sim_{u}w$.
This concludes the forward direction.
\end{itemize}
In the other direction, suppose that we have injections $u\mapsto a_{u}:N\to M_{X}$
and $v\mapsto b_{v}:L\to M_{\mathcal{\F}}$ satisfying (1), (2), and
(3). Let $N_{s}$, $N_{i}$, and $L_{i}$ be as above, and let $a^{(i)}$
be the sequence of vertices along the spine. For $i<\omega$, define
$p_{i}=\{\varphi^{\star}(a^{(i)},\F)\}\cup\{\neg\varphi^{\star}(a^{(i')},\F):i'<i\}$,
where $\varphi^{\star}(a^{(i)},\F)$ is either $\varphi(a^{(i)},\F)$
or $\neg\varphi(a^{(i)},\F)$, depending on which one the leaves in
$L_{i}$ satisfy. 

For each $i$, by restricting the maps $u\mapsto a_{u}$ and $v\mapsto b_{v}$
to $N_{i}$ and $L_{i}$ respectively, we get injections which inherit
(2) and (3) from the original function. Towards (1), notice that for
each $v\in L_{i}$, $b_{v}$ satisfies $p\wedge p_{i}$, by definition
of $p_{i}$. Hence, by induction, $R^{\varphi}(p\wedge p_{i})\ge k$.
Since for each $i<j$, $p_{i}$ and $p_{j}$ disagree on $\varphi(a^{(i)},\F)$,
we have that $R^{\varphi}(p)\ge k+1$, which concludes the proof.
\end{proof}
\begin{figure}
\begin{tikzpicture} 
\Tree
[ .$a_0$
	[ .$a_{01}$
		[ .$b_{01}$
		]
		[ .$a_{02}$
			[ .$b_{02}$
			]
			[ .$a_{03}$ 
				[ .$b_{03}$
				]
				[ .$\vdots$
				]
			]
		]
	]
	[ .$a_1$
		[ .$a_{10}$
			[ .$b_{10}$
			]
			[ .$a_{11}$
				[ .$b_{11}$
				]
				[ .$a_{12}$ 
					[ .$b_{12}$
					]
					[ .$\vdots$
					]
				]
			]
		]
		[ .$a_2$
			[ .$a_{20}$
				[ .$b_{20}$
				]
				[ .$a_{21}$
					[ .$b_{21}$
					]
					[ .$a_{22}$. 
						[ .$b_{22}$
						]
						[ .$\vdots$
						]
					]
				]
			] 
			[ .$a_3$
				[ .$a_{30}$.
					[ .$b_{30}$
					]
					[ .$a_{31}$
						[ .$b_{31}$
						]
						[ .$a_{32}$ 
							[ .$b_{32}$
							]
							[ .$\vdots$
							]
						]
					] 
				]		
				[ .$\vdots$
				]
			]
		]
	]
]

\end{tikzpicture}

\caption{By Theorem \ref{thm:rank d iff d-tree}, $R^{\varphi}(p)\ge2$ is
equivalent to the admissibility of the unordered tree $T_{2}$ in
$M$, such that every $b_{ij}$ satisfies $p$.}
\end{figure}

\begin{thm}
\label{thm:density-equals-rank}The thicket density of $(X,p(\mathcal{F}))$
is identical to $R^{\varphi}(p)$, as $\mathbb{R}_{\ge0}\cup\{-\infty,\infty\}$-valued
quantities.
\end{thm}

\begin{proof}
We show that both quantities are bounded above by each other.

$\F$or some $0\le k<\omega$, suppose that $R^{\varphi}(p)\ge k$.
By Theorem \ref{thm:rank d iff d-tree}, $\Th(X,\mathcal{\mathcal{F}})$
is consistent with $Adm_{p}^{T_{k}}$. By Remark \ref{rem:working in M},
for any finite subtree $S$ of $T_{k}$, $Adm_{p}^{S}$ is consistent,
so $(X,p(\mathcal{F}))$ admits $S$. But if we obtain $S$ by truncating
$T_{k}$ to depth $n$, then $S$ has $O(n^{k})$ leaves, thus ensuring
$\dens(X,\mathcal{F})\ge k$. Hence $R^{\varphi}(p)\le\dens(X,p(\mathcal{F}))$.

Conversely, suppose that for some $0\le k<\omega$, $R^{\varphi}(p)\not\ge k$.
By Theorem \ref{thm:rank d iff d-tree}, $Adm_{p}^{T_{k}}$ is inconsistent
with $\Th(X,\mathcal{\mathcal{F}})$. By compactness, some finite
fragment of $Adm_{p}^{T_{k}}$ is inconsistent with $\Th(X,\mathcal{\mathcal{F}})$,
and hence there exists a finite subtree $S$ of $T_{k}$ such that
$Adm_{p}^{S}$ is inconsistent with $\Th(X,\mathcal{\mathcal{F}})$.
By Remark \ref{rem:working in M}, $(X,p(\mathcal{\mathcal{F}}))$
forbids $S$. Since $T_{k}$ has dimension $k$, $S$ has dimension
at most $k$. By Theorem \ref{thm:fundamental thm forbidden trees},
$\dens(X,p(\mathcal{\mathcal{F}}))\le k-1$. Hence $\dens(X,p(\mathcal{\mathcal{F}}))\le R^{\varphi}(p)$.
\end{proof}
As an immediate corollary, we deduce that thicket density is integer-valued,
in contrast to VC density. This fact was first proven by James Freitag
and Dhruv Mubayi, using elementary combinatorics, and a very short
elementary proof has been found by Ross Berkowitz. (Both of these
results are unpublished and were communicated to us in person.)

\section{Degree\label{sec:Degree}}

Each model-theoretic rank has an associated notion of \emph{degree}
(sometimes called \emph{multiplicity}); roughly, given a formula $p$,
this is the maximum number of pairwise contradictory extensions of
$p$ which each have rank no less than $p$. For any fixed rank, this
degree must be absolutely bounded by some cardinal $\kappa$, where
the rank of a formula is at least $\alpha+1$ if there are at least
$\kappa$ many pairwise contradictory extensions of rank $\alpha$.
For example, we can define the degree $D$ associated with the rank
$R^{\varphi}$ as follows.
\begin{defn}
For any finite $\varphi$-type $p$, let $D(p)$ be the greatest integer
$n<\omega$ such that there exist pairwise contradictory finite $\varphi$-types
$p_{1},\dots,p_{n}$, such that for each $1\le i\le n$, $R^{\varphi}(p\wedge p_{i})=R^{\varphi}(p)$.\footnote{If there were arbitrarily large such $n$, the there must exist an
infinite family $p_{i}$ of pairwise contradictory $\varphi$-types,
such that for each $i$, $R(p\wedge p_{i})=R(p)$, contradicting Definition
\ref{def:Rank R}.}
\end{defn}

In this section, we develop a notion of degree that seems natural
and appropriate for thicket density. Like in Section \ref{sec:Ranks},
we fix a set system $(X,\mathcal{\mathcal{F}})$ and a sufficiently
saturated model $M$ of $\Th(X,\mathcal{\mathcal{F}})$. For a finite
$\varphi$-type $p$ to have \emph{parameters from} $X$ means every
conjunct of $p$ is of the form $\varphi(x,\F)$ or $\neg\varphi(x,\F)$
for some $x\in X$. For such a type $p$, let $\dens(p)$ abbreviate
$\dens(X,p(\mathcal{\mathcal{F}}))$, where $p(\mathcal{F})=\{F\in\mathcal{F}:p(F)\}$.%

\begin{defn}
Given a vertex $v$ in an $X$-labeled tree $T$, define the finite
$\varphi$-type $p_{v}(\F)$ to be the conjunction of literals $\varphi(x_{u},\F)$
for $u\prec_{L}v$, and $\neg\varphi(x_{u},\F)$ for $u\prec_{R}v$.
Then $p_{v}$ is a finite $\varphi$-type with parameters from $X$,
and $F$ solves $v$ if and only if $p_{v}(F)$.
\end{defn}

\begin{defn}
Let $p$ be a finite $\varphi$-type with parameters from $X$. An
$X$-labeled tree $T$ \emph{factors }$p$ in case $\dens(p\wedge p_{v})=\dens(p)$,
for each leaf $v$ of $T$. Furthermore, $T$ \emph{irreducibly factors}
$p$ if, in addition, no proper extension $T'$ of $T$ factors $p$.
We say that $p$ is \emph{irreducible }if it is irreducibly factored
by a single leaf. We say that $(X,\mathcal{F})$ is irreducible in
case the empty type is.
\end{defn}

\begin{lem}
\label{lem:nondeterministic tree constr}For every finite $\varphi$-type
$p$ with parameters from $X$, if $0\le\dens(p)<\infty$, then $p$
is irreducibly factored by some finite tree.
\end{lem}

\begin{proof}
Fix the domain $X$. For each finite type $p\in S_{\varphi}(\mathcal{F})$,
we build an $X$-labeled tree $\Pi(p)$ irreducibly factoring $p$
via a nondeterministic construction.
\begin{itemize}
\item If $p$ is irreducible, $\Pi(p)$ is a single leaf.
\item Otherwise, let $x\in X$ satisfy $\dens(p\wedge\varphi(x,\F))=\dens(p\wedge\neg\varphi(x,\F))=\dens(p)$.
Let $\Pi(p)$ have root $x$ and left and right subtrees $\Pi(p\wedge\varphi(x,\F))$
and $\Pi(p\wedge\neg\varphi(x,\F))$ respectively.
\end{itemize}
We claim that this construction terminates after finitely many steps.
For otherwise, there would be infinite branch $x_{0},x_{1},x_{2},\dots$
and an infinite sequence $p=p_{0}\subset p_{1}\subset p_{2}\subset\dots$
such that for each $i<\omega$, either $p_{i+1}=p_{i}\wedge\varphi(x_{i},F)$
or $p_{i+1}=p_{i}\wedge\neg\varphi(x_{i},F)$, and $\dens(p_{i})=\dens(p_{i}\wedge\varphi(x_{i},F))=\dens(p_{i}\wedge\neg\varphi(x_{i},F))$.
Let $p_{i}^{\dagger}$ be the extension of $p_{i}$ that is not $p_{i+1}$.
Then, for each $i<\omega$ $\dens(p_{i}^{\dagger})=\dens(p)$, and
for each $i<j<\omega$, $p_{i}^{\dagger}$ and $p_{j}^{\dagger}$
agree on $\varphi(x_{h},F)$ for $h<i$, and disagree on $\varphi(x_{i},F)$.

Let $k=\dens(p)$. By Theorems \ref{thm:rank d iff d-tree} and \ref{thm:density-equals-rank},
for each $i<\omega$, we can find a consistent labeling of $T_{k}$
by elements of $M$, such that each leaf satisfies $p_{i}^{\dagger}$.
By stringing these trees along a spine labeled by the sequence $x_{0},x_{1},x_{2},\dots$,
we get a consistent labeling of $T_{k+1}$ by elements of $M$, such
that each leaf satisfies $p$. But this implies that $\dens(p)\ge k+1$,
a contradiction.
\end{proof}
\begin{figure}
\begin{tikzpicture}
\Tree
[.$x$
	[.$\circ$
	]
	[.$z$
		[.$\circ$
		]
		[.$\circ$
		]
	]
]

\end{tikzpicture}$\ \ $ \begin{tikzpicture}
\Tree
[.$\dens(\mathcal{F})$
	[.$\dens(\mathcal{F}_x)$
	]
	[.$\dens(\mathcal{F}_{\bar{x}})$
		[.$\dens(\mathcal{F}_{\bar{x}z})$
		]
		[.$\dens(\mathcal{F}_{\bar{x}\bar{z}})$
		]
	]
]

\end{tikzpicture}

\caption{An illustration suggesting $\Pi(\top)$ (on the left), and the densities
of the corresponding set systems (on the right). Each of these densities
is the same, but the set systems labeling the leaves are irreducible,
and cannot be ``split'' into subsystems of the same density by any
element of $X$.}

\end{figure}

We now show that even though there may not be a unique tree that irreducibly
factors a set system, the number of vertices in any such tree is the
same. Even stronger, the partition induced by such a tree is unique
up to rearrangement by pieces of strictly smaller density. This is
analogous to the situation for, e.g., Morley rank.
\begin{lem}
Let $p$ be a finite type with parameters from $X$. Suppose $\delta=\dens(p)$
is finite and nonnegative, and suppose $T_{1}$ and $T_{2}$ are $X$-labeled
trees that each irreducibly factor $p$. Then $T_{1}$ and $T_{2}$
have the same number of leaves. A fortiori, there is a bijection between
the leaves of $T_{1}$ and the leaves of $T_{2}$, such that $\dens(p\wedge(p_{v}\Delta p_{w}))<\delta$
for any bijective pair $(v,w)$.
\end{lem}

\begin{proof}
For $i\in\{1,2\}$, let $L_{i}$ be the set of leaves of $T_{i}$.
Since $T_{1}$ and $T_{2}$ each factor $p$, $\dens(p\wedge p_{v})=\dens(p\wedge p_{w})=\delta$
for each $v\in L_{1}$ and $w\in L_{2}$.

For any fixed $v\in L_{1}$, $\dens(p\wedge p_{v})=\max\{\dens(p\wedge p_{v}\wedge p_{w}):w\in L_{2}\}$.\footnote{In general, $\dens(X,\mathcal{F\cup G})=\max\{\dens(X,\mathcal{F}),\dens(X,\mathcal{G})\}$.
The thicket shatter function on the left is bounded below by both
thicket shatter functions on the right, and bounded above by their
sum.} Hence, for some $w\in L_{2}$, $\dens(p\wedge p_{v}\wedge p_{w})=\delta$.
However, such a $w$ must be unique; if $\dens(p\wedge p_{v}\wedge p_{w})=\dens(p\wedge p_{v}\wedge p_{w'})=\delta$,
then $T_{1}$ does not irreducibly factor $p$: the leaf $v$ can
be replaced by a non-leaf labeled by the least common ancestor of
$w$ and $w'$ in $T_{2}$.

Reasoning symmetrically, for any $w\in L_{2}$, there is a unique
$v\in L_{1}$ such that $\dens(p\wedge p_{v}\wedge p_{w})=\delta$.
Hence, the relation $R(v,w)\iff\dens(p\wedge p_{v}\wedge p_{w})=\delta$
is the graph of a bijection, so $|L_{1}|=|L_{2}|$. Furthermore, for
any bijective pair $(v,w)$, 
\[
\dens(p\wedge(p_{v}\Delta p_{w}))=\max\{\dens(p\wedge p_{v}\wedge p_{w'}),\dens(p\wedge p_{v'}\wedge p_{w}):v'\neq v,w'\neq w\},
\]
and this quantity must be strictly less than $\delta$, since we maximize
over finitely many densities all strictly less than $\delta$. 
\end{proof}
Hence, we can now define
\begin{defn}
For any finite $\varphi$-type $p$ with parameters from $X$, the
\emph{thicket degree }$\deg(p)$ is the number of leaves of any tree
irreducibly factoring $p$. The degree $\deg(X,\mathcal{F})$ is defined
to be $\deg(\top)$, the degree of the empty conjunction.
\end{defn}

\begin{rem}
Given any tree $T$ irreducibly factoring $p$, the types $p_{v}$
are pairwise contradictory as $v$ varies over the leaves of $T$,
and moreover $R^{\varphi}(p\wedge p_{v})=R^{\varphi}(p)$ for any
leaf $v$. Hence $\deg(p)\le D(p)$.
\end{rem}

\begin{rem}
By a finite $\varphi^{\star}$-type $q$ with parameters from $\mathcal{F}$,
we mean a finite conjunction of formulas of the form $\varphi(\x,F)$
and $\neg\varphi(\x,F)$, where $F$ ranges over $\mathcal{F}$. We
can define the \emph{dual degree}, $\deg^{\star}(q)$, by switching
the roles of $X$ and $\mathcal{F}$ throughout. Concretely, the dual
degree of $q$ is the number of leaves of any $\mathcal{F}$-labeled
binary tree $T$, such that $\dens^{\star}(q\wedge q_{v})=\dens^{\star}(q)$
for any leaf $v$ of $T$, and $T$ is maximal with respect to this
property. (The dual degree of $q$ is simply the degree of $q$ in
the dual set system.)
\end{rem}

\begin{rem}
Even though thicket density is equivalent to the rank $R$, the thicket
degree can differ from the degree $D$ associated with the rank $R$.
It suffices, and is easier, to exhibit a difference between the corresponding
dual quantities. Let $X$ be any infinite set, and let $\mathcal{F}$
be a partition of $X$ with arbitrarily large finite sets, but no
infinite set. Then $\dens^{\star}(X,\mathcal{F})\ge1$, since $X$
is infinite, but for any $F\in\mathcal{F}$, $\dens^{\star}(F,\mathcal{F}\restriction F)=0$,
as $F$ is finite (cf. Lemma \ref{lem:basic density vs cardinality}).
Hence, $(X,\mathcal{F})$ is (dually) irreducible: any $\mathcal{F}$-labeled
tree that irreducibly factors $(X,\mathcal{F})$ must be a single
leaf, and hence $\deg(X,\mathcal{F})=1$. On the other hand, using
compactness, we can find an element $b\in M_{\mathcal{F}}$ that has
infinitely many members and infinitely many non-members among elements
of $M_{X}$. Therefore, $R^{\star}(\varphi(\x,b))$ and $R^{\star}(\neg\varphi(\x,b))$
are both at least $1$, so $D^{\star}(\top)\ge2$.
\end{rem}

\section{Conclusion and future work}

We have established a relationship between a measure of asymptotic
growth of certain finite objects with the ordinal rank of an infinite
object, a common phenomenon in the tradition relating infinitary and
finitary combinatorics. There are several lines of inquiry that our
work raises. For example, from a technical standpoint, we do not know
what other information about a set system is contained in its thicket
shatter function. We suspect that other invariants (like the \emph{leading
coefficient}) might encode something interesting. Furthermore, there
are many identities that the rank $R^{\varphi}$ is known to satisfy,
and it would be interesting to see if they might admit a purely combinatorial
proof.

The similarity between the Sauer-Shelah Lemma and the present ``thicket''
version naturally raises the question of whether they both share a
general setting. Chase and Freitag \cite{CF} answer this question
positively by formulating a shatter function for \emph{op-rank }of
Guingona and Hill \cite{GH13}, which interpolates Shelah's 2-rank
with VC dimension, and establishing a dichotomy which interpolates
both versions of the Sauer-Shelah Lemma. Most questions about the
corresponding notion of density remain open, for example, what values
it takes ``between'' the VC and thicket cases.

Finally, as we mentioned in the Introduction, this work was originally
inspired by reading Tiuryn's paper \cite{Tiu89} separating deterministic
from nondeterministic dynamic logic. Several branches of computer
science, such as dynamic logic, descriptive complexity theory, and
program schematology, are concerned with \emph{programs that operate
over first-order structures}. A basic construction in programming
language theory is the \emph{unwind }of a program; roughly, transforming
a finitary program with recursive calls into an infinitary program
without. A fundamental invariant of the unwind of a program its its
underlying \emph{decision tree}. Many algorithmic lower bounds obtained
from lower bounds of decision trees, for example, the $\Omega(n\log n)$
lower bound on deterministic comparison sorting.

We can leverage the thicket shatter function, if it is bounded by
a polynomial, to prove stronger lower bounds that what we would otherwise
be able to show using binary decision trees. (In fact, Tiuryn does
just this.) We suspect that there might be more applications of our
results to computer science, in particular, in establishing lower
bounds for programs which branch using a model-theoretically tame
family of conditional tests.

\bibliographystyle{plain}
\bibliography{treedim}

\end{document}